\numberwithin{equation}{section}
\def\today{\number\day\space\ifcase\month\or   January\or February\or
   March\or April\or May\or June\or   July\or August\or September\or
   October\or November\or December\fi\   \number\year}
\theoremstyle{definition}
\newtheorem{thm}{Theorem}[section]
\newtheorem{lem}[thm]{Lemma}
\newtheorem{prp}[thm]{Proposition}
\newtheorem{cor}[thm]{Corollary}
\newtheorem{cnv}[thm]{Convention}
\newtheorem{rmk}[thm]{Remark}
\newcommand{\beq}{\begin{equation}}
\newcommand{\eeq}{\end{equation}}
\newcommand{\beqr}{\begin{eqnarray*}}
\newcommand{\eeqr}{\end{eqnarray*}}
\newcommand{\bal}{\begin{align*}}
\newcommand{\eal}{\end{align*}}
\newcommand{\bei}{\begin{itemize}}
\newcommand{\eei}{\end{itemize}}
\newcommand{\af}{\alpha}
\newcommand{\dt}{\delta}
\newcommand{\ep}{\varepsilon}
\newcommand{\zt}{\zeta}
\newcommand{\et}{\eta}
\newcommand{\ch}{\chi}
\newcommand{\ld}{\lambda}
\newcommand{\rh}{\rho}
\newcommand{\Q}{{\mathbb{Q}}}
\newcommand{\Z}{{\mathbb{Z}}}
\newcommand{\R}{{\mathbb{R}}}
\newcommand{\C}{{\mathbb{C}}}
\newcommand{\N}{{\mathbb{Z}}_{> 0}}
\newcommand{\Nz}{{\mathbb{Z}}_{\geq 0}}
\newcommand{\id}{{\operatorname{id}}}
\newcommand{\sint}{{\operatorname{int}}}
\newcommand{\spec}{{\operatorname{sp}}}
\newcommand{\supp}{{\operatorname{supp}}}
\newcommand{\rank}{{\operatorname{rank}}}
\newcommand{\rc}{{\operatorname{rc}}}
\newcommand{\dirlim}{\varinjlim}
\newcommand{\andeqn}{\qquad {\mbox{and}} \qquad}
\newcommand{\ifo}{if and only if}
\newcommand{\ca}{C*-algebra}
\newcommand{\uca}{unital C*-algebra}
\newcommand{\pj}{projection}
\newcommand{\mvnt}{Murray-von Neumann equivalent}
\newcommand{\ct}{continuous}
\newcommand{\cfn}{continuous function}
\newcommand{\cms}{compact metrizable space}
\newcommand{\chs}{compact Hausdorff space}
\renewcommand{\S}{\subset}
\newcommand{\I}{\infty}
\newcommand{\E}{\varnothing}
\title{The radius of comparison of $C (X)$}
\author{N.~Christopher Phillips}
\date{14~September 2023}
\address{Department of Mathematics, University  of Oregon,
       Eugene OR 97403-1222, USA.}
\subjclass[2010]{Primary 46L80.}
\thanks{This material is based upon work supported by
  the Simons Foundation Collaboration Grant for Mathematicians \#587103
  and by the US National Science Foundation under Grant DMS-2055771.}
\begin{document}

\begin{abstract}
Let $X$ be a compact Hausdorff space.
Then the radius of comparison ${\mathrm{rc}} ( C (X))$
is related to the covering dimension ${\mathrm{dim}} (X)$ by
\[
{\mathrm{rc}} ( C (X)) \geq \frac{{\mathrm{dim}} (X) - 7}{2}.
\]
Except for the additive constant,
this improves a result of Elliott and Niu, who proved that
if $X$ is metrizable then
\[
{\mathrm{rc}} (C (X)) \geq \frac{{\mathrm{dim}}_{\Q} (X) - 4}{2}.
\]
There are compact metric spaces~$X$ for which the estimate of
Elliott and Niu gives no information, but for which
${\mathrm{rc}} ( C (X))$ is infinite or has arbitrarily large finite values.
\end{abstract}

\maketitle

\indent

\section{Introduction}\label{Sec_1801_Sec1}

\indent
The radius of comparison ${\mathrm{rc}} (A)$ of a \uca~$A$
was introduced in~\cite{Tms1}
to distinguish counterexamples to the Elliott classification program
in the absence of Jiang-Su stability.
Since AH~algebras are a rich source of examples of simple \ca{s},
both nonclassifiable (like the ones in~\cite{Tms1}) and classifiable,
and since the radius of comparison is sometimes well behaved in direct limits,
in particular in AH~systems
(in~\cite{BRTTW}, see Proposition 3.2.4(iii) and Proposition 3.2.3),
it is of interest to compute $\rc (C (X, M_n))$ for $n \in \N$
and a \chs~$X$.
Since $\rc (M_n (A)) = \frac{1}{n} \rc (A)$,
this reduces to the computation of ${\mathrm{rc}} ( C (X))$.

For further motivation,
let $\dim (X)$ denote the covering dimension of the topological space~$X$.
(See Definition~1.1 in Chapter~3 of~\cite{Prs}.)
It has long been known that
$\rc (C (X)) \leq \frac{1}{2} \dim (X)$.
(See (2.1) in~\cite{EllNiu2}, or Subsection~4.1 in~\cite{BRTTW},
for a slightly more precise result;
these are not the original sources.
The factor $\frac{1}{2}$ arises from the use of complex scalars.)
Moreover, $\rc (A)$ behaves somewhat like a noncommutative generalization
of a dimension for topological spaces.
This inequality therefore suggests that, as happens for other
noncommutative dimensions such as real rank, stable rank,
and decomposition rank, $\rc (C (X))$ should be related to $\dim (X)$.

For a \cms~$X$ and an abelian group~$G$, let $\dim_G (X)$ be the
cohomological dimension of $X$ with coefficients~$G$,
as in Section~1 of~\cite{Drnv}.
In~\cite{EllNiu2}, Elliott and Niu proved that
\begin{equation}\label{Eq_2511_EN}
\rc (C (X)) \geq \frac{{\mathrm{dim}}_{\Q} (X) - 4}{2}.
\end{equation}
(They give a slightly better estimate when ${\mathrm{dim}}_{\Q} (X)$
is odd, but that estimate can be recovered from~(\ref{Eq_2511_EN})
by rounding up, because $\rc (C (X)) \in \Nz \cup \{ \I \}$
by Corollary~2.4 of~\cite{EllNiu2};
also see Lemma~\ref{L_1725_Dfn_rcCX} below.)
While satisfactory for spaces such as finite complexes,
this result leaves open the question of whether, in general,
$\rc (C (X))$ is related to $\dim (X)$, $\dim_{\Q} (X)$,
something else (such as $\dim_{\Z} (X)$),
or some new dimension for topological spaces.
We prove (Corollary~\ref{C_2705_Cor} below) that for any \cms~$X$, one has
\begin{equation}\label{Eq_2511_cd}
\rc (C (X)) \geq \frac{{\mathrm{dim}} (X) - 7}{2},
\end{equation}
showing that the covering dimension is at least nearly the
right commutative dimension.
(We get a slightly better bound for some values of $\dim (X)$;
see Theorem~\ref{T_2705_Ests} below.)
On the other hand, the additive constant in~(\ref{Eq_2511_cd})
is worse than the one in~(\ref{Eq_2511_EN}),
and we don't know how to improve it in general.

We illustrate the differences between ${\mathrm{dim}} (X)$
and ${\mathrm{dim}}_{\Q} (X)$.
To begin, recall from Example 1.3(1) of~\cite{Drnv}
that for any \cms~$X$ and any abelian group~$G$, one has
\[
{\mathrm{dim}}_{G} (X)
 \leq {\mathrm{dim}}_{\Z} (X)
 \leq {\mathrm{dim}} (X).
\]
Theorem~7.1 of~\cite{Drnv} gives a \cms~$X$
such that ${\mathrm{dim}} (X) = \I$ and ${\mathrm{dim}}_{\Z} (X) \leq 3$.
So ${\mathrm{dim}}_{\Q} (X) \leq 3$.
The best estimate gotten from~(\ref{Eq_2511_EN})
is $\rc (C (X)) \geq 0$, but the estimate~(\ref{Eq_2511_cd})
shows that $\rc (C (X)) = \I$.
We point out that this space~$X$ contains no closed subspace~$Y$
with $3 < \dim (Y) < \I$.
Indeed, if $Y \S X$ is closed and $\dim (Y) < \I$,
then, using Theorem~1.4 of~\cite{Drnv} and Corollary~1.2 of~\cite{Drnv},
we have
$\dim (Y) = {\mathrm{dim}}_{\Z} (Y) \leq {\mathrm{dim}}_{\Z} (X) \leq 3$.

Theorem~1.4 of~\cite{Drnv} states that if ${\mathrm{dim}} (X) < \I$,
then ${\mathrm{dim}}_{\Z} (X) = {\mathrm{dim}} (X)$.
However, it is still possible for ${\mathrm{dim}}_{\Q} (X)$
to be much smaller than ${\mathrm{dim}}_{\Z} (X)$.
Let $P \S \N$ be the set of primes.
For $p \in P$, approximately following the introduction to
Section~2 of~\cite{Drnv},
let $\Z_{\langle p \rangle}$ denote the localization of $\Z$ at the
prime ideal $\langle p \rangle$ generated by~$p$, and let
$\Z_{p^{\infty}} = \dirlim_n \Z / p^n \Z$,
via the maps $\Z / p^n \Z \to \Z / p^{n + 1} \Z$
which send $1 + p^n \Z$ to $p + p^{n + 1} \Z$.
For an example of what can happen,
we combine Proposition~4.1 of~\cite{Drnv} and Theorem~5.1 of~\cite{Drnv},
referring to Theorem~2.4 of~\cite{Drnv} for the statement of the
Bockstein inequalities, to see that for every function
$r \colon P \to \N \cup \{ \I \}$ there is a \cms~$X$ such that
\[
{\mathrm{dim}}_{\Q} (X) = 1,
\quad
{\mathrm{dim}}_{\Z / p \Z} (X) = {\mathrm{dim}}_{\Z_{p^{\infty}}} (X) = r (p),
\quad {\mbox{and}} \quad
{\mathrm{dim}}_{\Z_{\langle p \rangle}} (X) = r (p) + 1.
\]
It then follows from Theorem~2.1 of~\cite{Drnv}
that ${\mathrm{dim}}_{\Z} (X) = 1 + \sup_{p \in P} r (p)$.
It is not stated in~\cite{Drnv}, but one can see by examining the
proofs there that the space~$X$ gotten from the construction
also satisfies ${\mathrm{dim}} (X) = 1 + \sup_{p \in P} r (p)$.
(The key point is that the spaces constructed in Corollary~5.3 of~\cite{Drnv}
have dimension~$n$.)
For every $d \in \{ 2, 3, \ldots, \I \}$, an appropriate choice of $r$
gives a \cms~$X$ such that
${\mathrm{dim}}_{\Z} (X) = {\mathrm{dim}} (X) = d$,
but ${\mathrm{dim}}_{\Q} (X) = 1$.
The estimate~(\ref{Eq_2511_EN}) gives no information,
but the estimate~(\ref{Eq_2511_cd})
shows that $\rc (C (X)) \geq \frac{1}{2} (d - 7)$,
which can be infinite, or finite but arbitrarily large.

Elliott and Niu build witnesses to lower bounds for $\rc (C (X))$
using Chern classes and the Chern character.
This approach depends on vector bundles with nonzero classes
in $\Q \otimes K^0 (Y)$ for suitable subspaces $Y \S X$.
It requires that $X$ have closed subspaces with nonzero
cohomology in high degrees,
and therefore can't work for the space $X$ discussed above with
${\mathrm{dim}} (X) = \I$ and ${\mathrm{dim}}_{\Z} (X) \leq 3$.

Our proof uses instead the following standard characterization
of the covering dimension $\dim (X)$.
The statement in~\cite{Prs} is actually more general:
it applies to arbitrary normal topological spaces.

\begin{thm}[Theorem 2.2 in Chapter~3
 of~\cite{Prs}]\label{T_1801_Dim_exts}
Let $X$ be a \chs{} and let $d \in \Nz$.
Then $\dim (X) \leq d$ \ifo{} for every closed subset $X_0 \S X$
and every \cfn{} $\xi_0 \colon X_0 \to S^d$,
there is a \cfn{} $\xi \colon X \to S^d$ such that $\xi |_{X_0} = \xi_0$.
\end{thm}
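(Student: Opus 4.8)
The plan is to run both implications through the classical \emph{partition} (Eilenberg--Otto) characterization of covering dimension: for a \chs{} (more generally a normal space) $X$, one has $\dim (X) \le d$ \ifo{} for every family of $d + 1$ pairs $(A_i, B_i)$, $i = 1, \ldots, d + 1$, of disjoint closed subsets of $X$, there are closed \emph{partitions} $L_i$ separating $A_i$ from $B_i$ (so $X \SM L_i = U_i \sqcup V_i$ with $U_i, V_i$ open and disjoint, $A_i \S U_i$, and $B_i \S V_i$) such that $\bigcap_{i = 1}^{d + 1} L_i = \E$. The bridge to the sphere is the following dictionary. Realize $S^d$ as the unit sphere in $\R^{d + 1}$ with coordinates $y_1, \ldots, y_{d + 1}$. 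Given a \cfn{} $\xi = (\xi_1, \ldots, \xi_{d + 1}) \colon X \to S^d$, the sets $L_i = \xi_i^{-1} (0)$ are partitions (with $U_i = \{ \xi_i > 0 \}$ and $V_i = \{ \xi_i < 0 \}$), and $\bigcap_i L_i = \xi^{-1} (0) = \E$ because $0 \notin S^d$. Conversely, partitions $L_i$ with empty intersection yield, via Urysohn functions $\xi_i$ vanishing exactly on $L_i$ and of the appropriate sign on $U_i$ and $V_i$, a \cfn{} $(\xi_1, \ldots, \xi_{d + 1})$ that never vanishes, hence (after normalizing) a map $X \to S^d$.

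For the implication that the extension property implies $\dim (X) \le d$, I would verify the partition condition directly. Given pairs $(A_i, B_i)$ as above, choose by Urysohn's lemma functions $\ph_i \colon X \to [-1, 1]$ with $\ph_i = 1$ on $A_i$ and $\ph_i = -1$ on $B_i$, and set $\ph = (\ph_1, \ldots, \ph_{d + 1}) \colon X \to [-1, 1]^{d + 1}$. Writing $I^{d + 1} = [-1, 1]^{d + 1}$ and identifying its boundary $\partial I^{d + 1}$ with $S^d$, let $Z = \ph^{-1} (\partial I^{d + 1})$, a closed set containing every $A_i$ and $B_i$. The extension hypothesis applied to the closed set $Z$ and the map $\ph |_Z \colon Z \to \partial I^{d + 1} \cong S^d$ produces $\xi \colon X \to \partial I^{d + 1}$ with $\xi |_Z = \ph |_Z$. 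Now $L_i = \xi_i^{-1} (0)$ separates $A_i$ (where $\xi_i = \ph_i = 1$) from $B_i$ (where $\xi_i = -1$), and $\bigcap_i L_i = \E$ since $\xi$ never hits the centre $0$ of the cube. Thus the partition condition holds and $\dim (X) \le d$.

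For the converse I would start from $\dim (X) \le d$, a closed set $X_0 \S X$, and a \cfn{} $\xi_0 \colon X_0 \to S^d$. By the Tietze extension theorem, extend the coordinate functions and then retract $\R^{d + 1}$ onto the closed unit ball to get $g = (g_1, \ldots, g_{d + 1}) \colon X \to B^{d + 1}$ with $g |_{X_0} = \xi_0$. Fix $\ep$ with $0 < \ep < 1 / \sqrt{d + 1}$ and put $A_i = \{ g_i \ge \ep \}$ and $B_i = \{ g_i \le - \ep \}$; these are disjoint and closed. The partition theorem gives separators $L_i$ with $\bigcap_i L_i = \E$, and Urysohn functions then furnish $h = (h_1, \ldots, h_{d + 1}) \colon X \to S^d$ with $h_i = 0$ exactly on $L_i$, $h_i > 0$ on the $A_i$-side, and $h_i < 0$ on the $B_i$-side. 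It remains to replace $h$ by an extension that actually restricts to $\xi_0$. For each $x \in X_0$ some coordinate satisfies $|(\xi_0)_i (x)| \ge 1 / \sqrt{d + 1} > \ep$, and for that index $h_i (x)$ is nonzero of the same sign as $(\xi_0)_i (x)$; hence the straight-line homotopy $t \mapsto (1 - t) \xi_0 + t \, (h |_{X_0})$ never passes through $0$ and, after normalization, is a homotopy in $S^d$ from $\xi_0$ to $h |_{X_0}$. Since $h$ extends $h |_{X_0}$ over $X$ and $S^d$ is an ANR, Borsuk's homotopy extension theorem lets me transport this homotopy to produce a \cfn{} $\xi \colon X \to S^d$ with $\xi |_{X_0} = \xi_0$, as required.

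The main obstacle is the dimension-theoretic core, namely the Eilenberg--Otto equivalence between $\dim (X) \le d$ and the empty-intersection partition condition; if it may not simply be quoted, then proving it (passing between finite open covers and systems of separators) is where the real work lies. Among the elementary steps, the delicate one is securing agreement with $\xi_0$ on $X_0$ in the converse: the separator construction only controls the \emph{signs} of the coordinates, so one cannot expect $h |_{X_0} = \xi_0$ on the nose, and the choice $\ep < 1 / \sqrt{d + 1}$ together with the straight-line homotopy and Borsuk's theorem is exactly what repairs this. One should also note that for the general normal (rather than \chs) version one uses the normal-space forms of the Urysohn, Tietze, and Borsuk theorems, all of which are available.
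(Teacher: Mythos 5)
The paper does not prove this statement at all: it is quoted, with attribution, as Theorem 2.2 in Chapter~3 of~\cite{Prs}, so there is no internal proof to compare against. Your proposal is in substance the classical textbook argument (as in Pears and in Engelking's \emph{Dimension Theory}): the Eilenberg--Otto/Hemmingsen separation characterization of $\dim$, the dictionary between maps to $S^d \cong \partial [-1,1]^{d+1}$ and systems of partitions, and, for the converse, Tietze extension into the ball, the threshold $\varepsilon < 1/\sqrt{d+1}$, the straight-line homotopy, and Borsuk's homotopy extension theorem for normal spaces. Your first direction (extension property implies the partition condition, via $\varphi = (\varphi_1, \ldots, \varphi_{d+1})$ and $Z = \varphi^{-1}(\partial I^{d+1})$) is correct as written, granting the separation theorem, which is a legitimate citation here since the paper itself cites the whole result.

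There is, however, one genuine misstep in the converse: you assert that Urysohn functions furnish $h_i$ vanishing \emph{exactly} on $L_i$, with strict signs on the two sides. In a general compact Hausdorff (or normal) space a closed set is the zero set of a continuous function only if it is a $G_\delta$, and the partitions $L_i$ produced by the separation theorem need not be zero sets; so this step fails outside the metrizable case, while the theorem is stated for arbitrary compact Hausdorff $X$. The standard repair is to demand much less: you only need $\bigcap_i h_i^{-1}(0) = \varnothing$ together with $h_i = 1$ on $A_i$ and $h_i = -1$ on $B_i$. Since $\bigcap_i L_i = \varnothing$, the sets $X \setminus L_i$ form a finite open cover; shrink it to closed sets $F_i \subset X \setminus L_i = U_i \cup V_i$ with $\bigcup_i F_i = X$. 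Then $F_i \cap U_i$ and $F_i \cap V_i$ are closed (because $F_i$ misses $L_i$), and Urysohn gives $h_i \colon X \to [-1,1]$ with $h_i = 1$ on $A_i \cup (F_i \cap U_i)$ and $h_i = -1$ on $B_i \cup (F_i \cap V_i)$; these two sets are disjoint since $A_i \subset U_i$ and $B_i \subset V_i$. Now $(h_1, \ldots, h_{d+1})$ never vanishes, and for $x \in X_0$ the coordinate $i$ with $| (\xi_0)_i (x) | \geq 1/\sqrt{d+1} > \varepsilon$ puts $x$ in $A_i$ or $B_i$, where $h_i (x) = \pm 1$ with the sign of $(\xi_0)_i (x)$, so your straight-line homotopy and the application of Borsuk's theorem go through unchanged. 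With this substitution your proof is complete and is the standard one.
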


We relate extendability of maps $X_0 \to S^{2 d - 1}$
for closed subspaces $X_0$ of $X$
to extendability of trivial rank one projections in $C (X_0, M_d)$,
and relate extendability of \pj{s} to the radius of comparison.
The proof does implicitly contain vector bundles over subspaces $X_0 \S X$,
but they are all stably trivial, so that their classes in $K^0 (X_0)$
are zero.

Most of this work was done during a visit to the
Westf\"{a}lische Wilhelms-Universit\"{a}t M\"{u}nster in July 2021,
and we are grateful to that institution for its hospitality.
The key Proposition~\ref{T_1801_Htpy} was provided by Thomas Nikolaus
during that visit.
We are also grateful to John Klein for helpful email correspondence.

\section{Extending projections}\label{Sec_2821_Sec2}

In this section, we prove, in effect, that if $r \geq \rc (C (X))$,
then for $X_0 \S X$ closed,
every rank one \pj{} $p_0 \in C (X_0, M_{r + 3})$ which is \mvnt{} to a
constant \pj{} can be
extended to a rank one \pj{} $p \in C (X, M_{r + 3})$ with the same property.
(See Remark~\ref{R_1726_Ext_pjs}.)
Without the restriction on the matrix size of~$p$,
there is always an extension, in fact,
an extension to a \pj{} in $C (X, M_{4 (r + 3)})$.
The point here is that the matrix size need not be increased.

The following somewhat nonstandard terminology is convenient.

\begin{cnv}\label{Cv_1722_Triv}
Let $X$ be a \chs, let $n \in \N$, and let $p \in C (X, M_n)$ be a \pj.
We say that $p$ is {\emph{trivial}} if $p$ is \mvnt{}
in $C (X, M_n)$ to a constant \pj.
\end{cnv}

A trivial \pj{} in $C (X, M_n)$
is \mvnt{} in $C (X, M_n)$ to any constant \pj{} with the correct rank.

\begin{lem}\label{L_1725_Extend}
Let $X$ be a \chs, and let $Y \S X$ be closed.
Let $n \in \N$, and let $p_0 \in C (Y, M_n)$ and $q \in C (X, M_n)$
be \pj{s}.
Suppose $p_0 (x) \perp q (x)$ for all $x \in Y$.
\begin{enumerate}
\item\label{Item_1725_Extend_p}
There are a closed set $Z \S X$ with $Y \S \sint (Z)$
and a \pj{} $p \in C (Z, M_n)$
such that $p |_{Y} = p_0$ and $p (x) \perp q (x)$ for all $x \in Z$.
\item\label{Item_1725_Extend_isom}
If moreover $e \in C (X, M_n)$ is a \pj,
and $s_0 \in C (Y, M_n)$
satisfies $s_0 s_0^* = p_0$ and $s_0^* s_0 = e |_Y$,
then $Z$ and~$p$ in (\ref{Item_1725_Extend_p})
may be chosen so that there is $s \in C (Z, M_n)$
satisfying $s s^* = p$, $s^* s = e |_Z$, and $s |_Y = s_0$.
\end{enumerate}
\end{lem}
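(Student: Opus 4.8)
The plan is to prove~(\ref{Item_1725_Extend_p}) by extending $p_0$ to a self-adjoint matrix-valued function, forcing orthogonality to~$q$ by compression, and then correcting to a \pj{} by continuous functional calculus on a \nbhd{} of~$Y$. First I would use the Tietze extension theorem to obtain $a_0 \in C (X, M_n)$ with $a_0 |_Y = p_0$, and replace $a_0$ by $\frac{1}{2} (a_0 + a_0^*)$ so that $a_0 = a_0^*$. Setting $a = (1 - q) a_0 (1 - q)$ gives a self-adjoint element of $C (X, M_n)$ with $a q = q a = 0$ everywhere and, since $p_0 q = q p_0 = 0$ on~$Y$, with $a |_Y = p_0$. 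Because $a |_Y = p_0$ is a \pj, the function $x \mapsto \| a (x)^2 - a (x) \|$ vanishes on~$Y$, so $W = \{ x \in X \colon \| a (x)^2 - a (x) \| < 2/9 \}$ is open and contains~$Y$; and since $| \ld^2 - \ld | \geq 2/9$ for $\ld \in [1/3, 2/3]$, the spectrum $\spec (a (x))$ misses $[1/3, 2/3]$ for every $x \in W$. Using normality of~$X$, I choose a closed set $Z$ with $Y \S \sint (Z) \S Z \S W$. Fixing a \cfn{} $f \colon \R \to \R$ with $f = 0$ on $(- \I, 1/3]$ and $f = 1$ on $[2/3, \I)$, I set $p = f (a) \in C (Z, M_n)$, which is a \pj{} because $\spec (a (x)) \S f^{-1} ( \{ 0, 1 \} )$ on~$Z$. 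Then $p |_Y = f (p_0) = p_0$, while $a q = 0$ and $f (0) = 0$ give $p q = 0$, so $p (x) \perp q (x)$ for all $x \in Z$.

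For~(\ref{Item_1725_Extend_isom}) I would retain this $a$ and~$p$ and polish an extension of $s_0$ by a polar decomposition. Starting from a Tietze extension $b \in C (X, M_n)$ of $s_0$, I set $s = p b e$. Then $p s = s$, $s e = s$, and, using $s_0 = p_0 s_0 = s_0 (e |_Y)$, one gets $s |_Y = p_0 s_0 (e |_Y) = s_0$; hence $(s^* s) |_Y = s_0^* s_0 = e |_Y$ and $(s s^*) |_Y = s_0 s_0^* = p_0 = p |_Y$, so both $s^* s - e$ and $p - s s^*$ vanish on~$Y$. Shrinking~$Z$ by normality, I may assume $\| s^* s - e \| < 1/2$ on~$Z$; since $s e = s$ puts $s^* s$ in the corner $e \, C (Z, M_n) \, e$, which has unit~$e$, the element $s^* s$ is positive and invertible there, and I put $s' = s (s^* s)^{- 1/2}$, with the inverse square root taken in that corner.

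Then $s'^* s' = e$ holds exactly, and $s' |_Y = s_0$ because $(s^* s)^{- 1/2} |_Y = e |_Y$. From $p s = s$ I get $p s' = s'$, so the \pj{} $s' s'^*$ satisfies $s' s'^* \leq p$, and $s' s'^* |_Y = s_0 s_0^* = p_0 = p |_Y$. Hence $p - s' s'^*$ is a \pj-valued \cfn{} that vanishes on~$Y$; shrinking~$Z$ once more so that $\| p - s' s'^* \| < 1$ forces $p - s' s'^* = 0$, since a \pj{} has norm $0$ or~$1$. Renaming $s'$ as~$s$ then gives $s s^* = p$, $s^* s = e |_Z$, and $s |_Y = s_0$, and $p \perp q$ is inherited from~(\ref{Item_1725_Extend_p}).

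The easy part is the extension-and-functional-calculus argument for~(\ref{Item_1725_Extend_p}); the hard part is~(\ref{Item_1725_Extend_isom}), where the approximate partial isometry $s = p b e$ must be converted into an exact one that keeps both relations $s^* s = e |_Z$ and $s s^* = p$ together with the boundary value $s |_Y = s_0$. The polar-decomposition polishing does this, the two crucial points being that the inverse square root must be computed in the corner $e \, C (Z, M_n) \, e$ so that $s'^* s' = e$ holds exactly rather than merely approximately, and that $s' s'^* = p$ is then obtained from the observation that a \cfn{} taking \pj{} values and lying within distance~$1$ of~$p$ must equal~$p$.
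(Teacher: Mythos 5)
Your proposal is correct and follows essentially the same route as the paper: in part~(\ref{Item_1725_Extend_p}) you extend $p_0$, compress by $1 - q$, and use a spectral-gap functional calculus on a closed \nbhd{} of~$Y$ (the paper uses a positive extension and $\ch_{(1/2, \I]}$ with the $\frac{1}{4}$ bound, you use a self-adjoint extension and a \ct{} step function with the $2/9$ bound --- a cosmetic difference); in part~(\ref{Item_1725_Extend_isom}) your $s = p b e$ polished by $(s^* s)^{-1/2}$ in the corner $e \, C (Z, M_n) \, e$ is exactly the paper's $c = r t e$ and $c (c^* c)^{-1/2}$. Your explicit justification that $s' s'^* = p$ --- via $s' s'^* \leq p$ and the observation that a \pj{}-valued \cfn{} vanishing on $Y$ and of norm less than~$1$ is zero --- usefully fills in a detail the paper leaves implicit in the phrase ``so small that.''
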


\begin{proof}
For the first part,
choose a positive element $a \in C (X, M_{n})$
such that $a |_{X_0} = p_0$.
For $x \in X$ define $b (x) = [ 1 - q (x) ] a (x) [ 1 - q (x) ]$.
Then $b (x) = p_0 (x)$ for all $x \in Y$.
So there is a closed set $Z \S X$ with $Y \S \sint (Z)$
such that for all $x \in Z$,
we have $\| b (x)^2 - b (x) \| < \frac{1}{4}$.
This implies $\frac{1}{2} \not\in \spec ( b (x))$,
so the \ct{} functional calculus
$p (x) = \ch_{ ( \frac{1}{2}, \I]} (b (x))$ is defined,
and gives a \ct{} \pj{} valued function on~$Z$.
Clearly $p |_{Y} = p_0$ and $p (x) \perp q (x)$ for all $x \in Z$.

For the second part, apply~(\ref{Item_1725_Extend_p}),
calling the resulting closed set and \pj{} $T$ and~$r$.
Choose any $t \in C (X, M_n)$ such that $t |_Y = s_0$.
For $x \in T$ define $c (x) = r (x) t (x) e (x)$.
Then $c (x) = s_0 (x)$ for all $x \in Y$.
So there is a closed set $Z \S X$ with $Y \S \sint (Z) \S Z \S T$
and which is so small that for all $x \in Z$, the expression
$s (x) = c (x) [ c (x)^* c (x) ]^{- 1/2}$,
with functional calculus in $e (x) M_{n} e (x)$,
is defined and satisfies $s (x) s (x)^* = r (x)$
and $s (x)^* s (x) = e (x)$.
Take $p = r |_Z$.
\end{proof}

\begin{lem}\label{L_1723_Flip}
Let $X$ be a \chs, and let $Y_1, Y_2 \S X$ be disjoint closed sets.
Let $n \in \N$, and let $p_1, p_2 \in C (X, M_n)$
be rank one \pj{s} which are orthogonal
and are both trivial in the sense of Convention~\ref{Cv_1722_Triv}.
Then there exists a unitary $w \in (p_1 + p_2) C (X, M_n) (p_1 + p_2)$
such that $w |_{Y_1} = (p_1 + p_2) |_{Y_1}$ and the \pj{}
$p = w p_1 w^*$ is trivial and satisfies $p |_{Y_j} = p_j |_{Y_j}$
for $j = 1, 2$.
\end{lem}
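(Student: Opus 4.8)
The plan is to build $w$ as a \ct{} field of rotations in the rank-two corner $q C (X, M_n) q$, where $q = p_1 + p_2$, turning on gradually from the identity of the corner over $Y_1$ to the quarter turn carrying $p_1$ onto $p_2$ over $Y_2$. The corner is a \uca{} with unit $q$, and I must produce $w$ in it with $w w^* = w^* w = q$ having the stated boundary values; pointwise it is isomorphic to $M_2$ acting on the plane spanned by the ranges of $p_1 (x)$ and $p_2 (x)$, and on that plane I want $w (x)$ to rotate by an angle $\theta (x)$ equal to $0$ over $Y_1$ and $\frac{\pi}{2}$ over $Y_2$. Since $X$ is normal and $Y_1, Y_2$ are disjoint and closed, Urysohn's lemma gives a \cfn{} $f \colon X \to [0, 1]$ with $f |_{Y_1} = 0$ and $f |_{Y_2} = 1$; set $\theta = \frac{\pi}{2} f$.

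The one essential use of the hypotheses is to manufacture a \ct{} partial isometry that interchanges the two line bundles. Because $p_1$ and $p_2$ are rank one and trivial, each is \mvnt{} to one fixed constant rank-one \pj{} $e$ (by the remark following Convention~\ref{Cv_1722_Triv}): choose $v_1, v_2 \in C (X, M_n)$ with $v_j^* v_j = e$ and $v_j v_j^* = p_j$, and put $v = v_2 v_1^*$. A short computation gives $v^* v = p_1$ and $v v^* = p_2$, and, since $p_1 \perp p_2$ forces $v_1^* v_2 = 0 = v_2^* v_1$, also $v^2 = 0 = (v^*)^2$. Thus $v$ carries the range of $p_1 (x)$ isometrically onto the range of $p_2 (x)$ at every point, continuously; without a global trivialization the two line bundles need not be continuously identifiable, which is exactly why triviality is needed here.

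Now define
\[
w = (\cos \theta) \, q + (\sin \theta)(v - v^*).
\]
Writing $J = v - v^*$, one has $J^* = - J$, $q J = J q = J$, and $J^2 = - (v v^* + v^* v) = - q$, so expanding $w w^*$ and $w^* w$ gives $(\cos^2 \theta + \sin^2 \theta) q = q$; hence $w$ is a unitary of the corner (and $v, q$ lie in the corner, so $w$ does too). Over $Y_1$, where $\theta = 0$, we get $w |_{Y_1} = q |_{Y_1} = (p_1 + p_2) |_{Y_1}$, and therefore $p |_{Y_1} = (q p_1 q) |_{Y_1} = p_1 |_{Y_1}$. Over $Y_2$, where $\theta = \frac{\pi}{2}$, we get $w |_{Y_2} = J |_{Y_2}$; using $J p_1 = v$ and $J^* = - J$ one finds $p |_{Y_2} = (J p_1 J^*) |_{Y_2} = (v v^*) |_{Y_2} = p_2 |_{Y_2}$. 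Finally, $w p_1$ is a partial isometry in $C (X, M_n)$ with $(w p_1)(w p_1)^* = p$ and $(w p_1)^* (w p_1) = p_1 q p_1 = p_1$, so $p$ is \mvnt{} to $p_1$; composing $w p_1$ with $v_1$ shows $p$ is \mvnt{} to the constant \pj{} $e$, so $p$ is trivial. I expect the only slightly delicate bookkeeping to be the pointwise orthogonality identities $v^2 = 0$ and $v_1^* v_2 = 0$ underlying the rotation formula; the rest is routine.
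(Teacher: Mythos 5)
Your proof is correct and follows essentially the same route as the paper: both rotate $p_1$ onto $p_2$ inside the corner $(p_1 + p_2) C (X, M_n) (p_1 + p_2)$ along a path from the identity to the flip, driven by a Urysohn function vanishing on $Y_1$ and equal to $1$ on $Y_2$. The only difference is presentational: the paper uses triviality to reduce to constant projections in $C (X, M_2)$ and then takes an abstract unitary path $u (\lambda)$ from $1$ to the flip, whereas you unwind that reduction and write the same rotation explicitly as $w = (\cos \theta) q + (\sin \theta)(v - v^*)$ with $v = v_2 v_1^*$ built from the trivializations, with all your pointwise identities ($v^* v = p_1$, $v v^* = p_2$, $v^2 = 0$, $J^2 = -q$, and the triviality of $p$ via $w v_1$) checking out.
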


The proof works just as well whenever $p_1$ and $p_2$ are trivial
projections of the same (constant) rank, not necessarily rank one.

\begin{proof}[Proof of Lemma~\ref{L_1723_Flip}]
We can replace $p_1$ and $p_2$ with \mvnt{} \pj{s},
provided the replacements are still orthogonal.
We can also replace $C (X, M_n)$ with
$(p_1 + p_2) C (X, M_n) (p_1 + p_2)$.
Therefore we may assume that $n = 2$ and that $p_1$ and~$p_2$
are the constant \pj{s} with values
$\left( \begin{smallmatrix} 1  & 0 \\ 0  &  0 \end{smallmatrix} \right)$
and
$\left( \begin{smallmatrix} 0  & 0 \\ 0  &  1 \end{smallmatrix} \right)$.
Choose a \ct{} path $\ld \mapsto u (\ld)$ of unitaries,
for $\ld \in [0, 1]$, such that $u (0) = 1$ and
$u (1)
 = \left( \begin{smallmatrix} 0 & 1 \\ 1 & 0 \end{smallmatrix} \right)$.
Choose a \cfn{} $f \colon X \to [0, 1]$ such that
$f (x) = 0$ for $x \in Y_1$ and $f (x) = 1$ for $x \in Y_2$.
Then define $w (x) = u (f (x))$ for $x \in X$.
\end{proof}

\begin{lem}\label{L_1723_Switch}
Let $X$ be a \chs, and let $Y \S X$ be closed.
Let $n \in \N$, and let $p_0 \in C (Y, M_n)$ be a rank one \pj{}
which is trivial in the sense of Convention~\ref{Cv_1722_Triv}.
Further let $q \in C (Y, M_n)$ be a rank one trivial \pj{}
such that $q |_Y \perp p_0$.
Then there exists a rank one trivial \pj{} $p \in C (Y, M_n)$
such that $p |_Y = p_0$.
\end{lem}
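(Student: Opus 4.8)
The plan is to avoid pasting $p_0$ and $q$ together directly. If one extends $p_0$ to a closed neighborhood of~$Y$ and then interpolates to~$q$, the resulting global rank one \pj{} acquires a clutching (first Chern) obstruction concentrated on the overlap, and there is no reason for it to be trivial. Instead I would realize the desired extension as a conjugate of the \emph{single}, globally defined trivial \pj~$q$: I will construct a unitary $u \in C (X, M_n)$ that equals~$1$ off a neighborhood of~$Y$ and satisfies $u q u^* |_Y = p_0$, and then set $p = u q u^*$. Because $q$ is trivial, so is $p$: if $v \in C (X, M_n)$ satisfies $v v^* = q$ and $v^* v = e$ for a constant rank one \pj~$e$, then $(u v)(u v)^* = u q u^* = p$ while $(u v)^* (u v) = v^* v = e$, so $p$ is \mvnt{} to~$e$. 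Triviality is thus automatic, and only the local identity $u q u^* |_Y = p_0$ must be arranged.

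First I would fix the trivializing data. Using that all constant rank one \pj{s} are \mvnt, choose a constant rank one \pj~$e \in C (X, M_n)$ together with partial isometries $v \in C (X, M_n)$ with $v v^* = q$, $v^* v = e$, and $s_0 \in C (Y, M_n)$ with $s_0 s_0^* = p_0$, $s_0^* s_0 = e|_Y$. Applying \Lem{L_1725_Extend}(\ref{Item_1725_Extend_isom}) with this~$e$ and~$s_0$ (the hypothesis $p_0 \perp q|_Y$ is given) yields a closed set $Z \S X$ with $Y \S \sint (Z)$, a \pj~$\tilde p \in C (Z, M_n)$ with $\tilde p|_Y = p_0$ and $\tilde p \perp q|_Z$, and a partial isometry $\tilde s \in C (Z, M_n)$ with $\tilde s \tilde s^* = \tilde p$, $\tilde s^* \tilde s = e|_Z$, and $\tilde s|_Y = s_0$. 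Setting $\tilde z = \tilde s (v|_Z)^* \in C (Z, M_n)$, a short computation using $v^* v = \tilde s^* \tilde s = e|_Z$ gives $\tilde z \tilde z^* = \tilde p$ and $\tilde z^* \tilde z = q|_Z$, so $\tilde z$ is a partial isometry from $q|_Z$ to~$\tilde p$ with $\tilde z|_Y = s_0 (v|_Y)^*$.

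Next I would taper a rotation to the identity. Choose a \cfn{} $f \colon X \to [0, 1]$ with $f = 1$ on~$Y$ and $f = 0$ on $X \SM \sint (Z)$, and on~$Z$ define
\[
u (x) = \cos \! \big( \ts{\frac{\pi}{2}} f (x) \big) \big( \tilde p (x) + q (x) \big)
  + \sin \! \big( \ts{\frac{\pi}{2}} f (x) \big) \big( \tilde z (x) - \tilde z (x)^* \big)
  + \big( 1 - \tilde p (x) - q (x) \big) ,
\]
extended by $u = 1$ on $X \SM \sint (Z)$. Since $f = 0$ on $Z \SM \sint (Z)$, the two formulas agree there, so $u \in C (X, M_n)$; and writing $J = \tilde z - \tilde z^*$ one checks $J^2 = - (\tilde p + q)$ on~$Z$, from which each $u (x)$ is unitary. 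On~$Y$ one has $f = 1$, so $u|_Y = \tilde z|_Y - \tilde z|_Y^* + (1 - p_0 - q|_Y)$, and a direct computation gives $u|_Y \, q|_Y \, u|_Y^* = \tilde p |_Y = p_0$. Hence $p = u q u^*$ is a rank one trivial \pj{} in $C (X, M_n)$ with $p|_Y = p_0$.

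The one genuine subtlety is the point made at the outset: the naive construction has a real cohomological obstruction, so the success of the argument hinges on conjugating the \emph{fixed} trivial projection~$q$ by a unitary damped to~$1$ away from~$Y$, which forces triviality of~$p$ for free. I expect the remaining work to be routine: the pasting-lemma check that $u$ is a well-defined continuous unitary, the identity $J^2 = -(\tilde p + q)$, and the verification that the $\frac{\pi}{2}$-rotation on~$Y$ carries $q|_Y$ onto~$p_0$.
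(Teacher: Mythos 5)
Your proof is correct, and its skeleton is the same as the paper's: both arguments use \Lem{L_1725_Extend}(\ref{Item_1725_Extend_isom}) to extend $p_0$, together with its trivializing partial isometry, to a closed neighborhood $Z$ of~$Y$, and then rotate $q$ onto the local extension through a unitary damped to the identity near $\partial Z$, so that the rotated projection agrees with $q$ on $X \SM Z$ and glues. The differences are in the packaging, and they are worth noting. Where the paper invokes \Lem{L_1723_Flip} --- which produces the damped flip by reducing to the constant projections $e_{11}, e_{22}$ in $M_2$ --- you build the rotation explicitly from the composed trivialization $\tilde z = \tilde s (v|_Z)^*$; your formula is exactly the $2 \times 2$ rotation
$\bigl( \begin{smallmatrix} \cos \te & \sin \te \\ - \sin \te & \cos \te \end{smallmatrix} \bigr)$
written in the frame determined by $\tilde p$ and~$q$, and the identities you need ($\tilde z^2 = 0$, which follows from $\tilde z = \tilde p\, \tilde z\, q$ and $\tilde p \perp q$, hence $J^2 = -(\tilde p + q)$ and $u|_Y\, q|_Y\, u|_Y^* = \tilde p|_Y = p_0$) all check out. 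More substantively, by extending the damped unitary by~$1$ to a global unitary $u \in C(X, M_n)$ and setting $p = u q u^*$, you make triviality of $p$ automatic: $uv$ is a trivializing partial isometry. The paper instead defines $p$ by pasting ($p = q$ off $Z$, $p = w q w^*$ on $Z$) and must then verify triviality by a separately pasted partial isometry ($t = s$ off $Z$, $t = s w^*$ on $Z$); your organization eliminates that second half of the verification at the cost of writing the rotation by hand rather than quoting \Lem{L_1723_Flip}. One small point: like the paper's own proof, you have silently corrected the typos in the statement, where $q$ and $p$ should lie in $C(X, M_n)$ rather than $C(Y, M_n)$ --- your reading is the intended one.
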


As for Lemma~\ref{L_1723_Flip}, the proof works for any rank in place
of~$1$.

The key point is that we are not allowed to increase~$n$.
If we embed $C (Y, M_n)$ in $C (Y, M_{4 n})$,
then in the larger algebra $p_0$ will be homotopic to a constant \pj,
and the existence of the extension will be automatic without using~$q$.

\begin{proof}[Proof of Lemma~\ref{L_1723_Switch}]
Choose any rank one constant \pj{} $e \in C (X, M_n)$,
and choose $s_0 \in C (Y, M_n)$ such that
$s_0 s_0^* = p_0$ and $s_0^* s_0 = e |_Y$.
Apply Lemma \ref{L_1725_Extend}(\ref{Item_1725_Extend_isom}),
getting $Z$, a \pj{} $p_1 \in C (Z, M_n)$ (called $p$ there)
such that $p_1 |_Y = p_0$,
and a partial isometry $v$ (called $s$ there)
such that $v v^* = p_1$ and $v^* v = e |_Z$.
Since $e |_Z$ is trivial of rank one, so is~$p_1$.
Because $Y \S \sint (Z)$ and $p_1 \perp q |_Z$,
we can apply Lemma~\ref{L_1723_Flip},
with $Z$ in place of~$X$, with $Y_1 = \partial Z$, with $Y_2 = Y$,
with $q |_Z$ in place of~$p_1$,
and with $p_1$ in place of~$p_2$,
getting a unitary $w \in (p_1 + q |_Z) C (Z, M_n) (p_1 + q |_Z)$
such that $w |_{\partial Z} = p_1 |_{\partial Z} + q |_{\partial Z}$
and $(w |_Y) (q |_Y) (w^* |_Y) = p_1 |_Y$.
Define
\[
p (x) = \begin{cases}
   q (x) & \hspace*{1em} x \in X \setminus Z
        \\
   w (x) q (x) w (x)^* & \hspace*{1em} x \in Z.
\end{cases}
\]
Then $p$ is \ct.
Since for $x \in Y$ we have $w (x) q (x) w (x)^* = p_1 (x) = p_0 (x)$,
it follows that $p |_{Y} = p_0$.

It remains to prove that $p$ is trivial.
Since $q$ is trivial, there is $s \in C (X, M_n)$ such that
$s s^* = e$ and $s^* s = q$.
Define
\[
t (x) = \begin{cases}
   s (x) & \hspace*{1em} x \in X \setminus Z
        \\
   s (x) w (x)^* & \hspace*{1em} x \in Z.
\end{cases}
\]
Then $t$ is \ct{}
because $w (x) = p_1 (x) + q (x)$ for $x \in \partial Z$.
For $x \in X \setminus Z$ we have
\[
t (x) t (x)^* = s (x) s (x)^* = e (x)
\andeqn
t (x)^* t (x) = s (x)^* s (x) = q (x) = p (x).
\]
For $x \in Z$ we have
\[
t (x) t (x)^*
 = s (x) w (x)^* w (x) s (x)^*
 = s (x) [p_1 (x) + q (x)] s (x)^*
 = s (x) s (x)^*
 = e (x)
\]
and
\[
t (x)^* t (x)
 = w (x) s (x)^* s (x) w (x)^*
 = w (x) q (x) w (x)^*
 = p (x).
\]
So $t t^* = e$ and $t^* t = p$.
Since $e$ is constant, this shows that $p$ is trivial.
\end{proof}

The proof of the following lemma in~\cite{ArPrTm} has
enough misprints that we give a full proof here.

\begin{lem}[Lemma 2.19 of~\cite{ArPrTm}]\label{L_1725_Cut}
Let $A$ be a \ca, let $a \in A_{+}$, and let $p \in A$ be a \pj.
Suppose $p \precsim a$.
Then there exist $\af \in (0, \I)$, a \pj{} $q \in A$,
and $\dt > 0$, such that $q$ is \mvnt{} to~$p$,
$q \leq \af a$, and, with
\[
g (\ld) = \begin{cases}
   0 & \hspace*{1em} 0 \leq \ld < \dt
        \\
   \dt^{-1} (\ld - \dt) & \hspace*{1em} \dt \leq \ld < 2 \dt
       \\
   1 & \hspace*{1em} 2 \dt \leq \ld,
\end{cases}
\]
we have $g (a) q = q$.
\end{lem}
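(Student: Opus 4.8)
The plan is to reduce the statement to the single assertion that there is a $\dt > 0$ and a \pj{} $q$, \mvnt{} to~$p$, lying in the hereditary subalgebra $\ov{(a - 2\dt)_+ A (a - 2\dt)_+}$; the constant $\af$ and the relation $g(a) q = q$ are then read off from elementary functional calculus. The one external tool I would use is the standard consequence of R{\o}rdam's lemma: if $x, y \in A_{+}$ with $x$ a \pj{} and $\|x - y\| < 1$, then $x = d y d^*$ for some $d \in A$. (Choose $\ep$ with $\|x - y\| < \ep < 1$; R{\o}rdam's lemma gives $(x - \ep)_+ = d_0 y d_0^*$, and since $x$ is a \pj{} one has $(x - \ep)_+ = (1 - \ep) x$, so $x = d y d^*$ with $d = (1 - \ep)^{-1/2} d_0$.)

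First I would manufacture a genuine spectral gap. Since $p \precsim a$, choose $v \in A$ with $\|v a v^* - p\| < \tfrac12$, and then choose $\dt > 0$ small enough that $2 \dt \|v\|^2 < \tfrac12$. Because $0 \le a - (a - 2\dt)_+ \le 2\dt$, this forces $\|v (a - 2\dt)_+ v^* - p\| < 1$, so the consequence of R{\o}rdam's lemma above produces $w \in A$ with
\[
p = w (a - 2 \dt)_+ w^* .
\]
The essential gain here is that $p$ is realized \emph{exactly} from the cut-down $(a - 2\dt)_+$, which has a spectral gap above~$0$; this is what will pin the equivalent projection inside the hereditary subalgebra of $a$ itself rather than that of $v a v^*$.

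Next I would extract the projection by a polar-decomposition argument. Put $b = (a - 2\dt)_+$ and $z = w b^{1/2}$, so that $z z^* = w b w^* = p$. Since $z^* z$ and $z z^*$ have the same spectrum away from~$0$, and $\spec (z z^*) = \spec (p) \S \{0, 1\}$, the element $q := z^* z$ has spectrum contained in $\{0, 1\}$ and hence is a \pj. Because $q = b^{1/2} (w^* w) b^{1/2} \in \ov{b A b}$, we obtain $q \in \ov{(a - 2\dt)_+ A (a - 2\dt)_+}$, and $z$ is a partial isometry with $z^* z = q$ and $z z^* = p$, so $q$ is \mvnt{} to~$p$.

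Finally I would verify the two remaining relations by functional calculus. As functions one has $g(t)(t - 2\dt)_+ = (t - 2\dt)_+$ for all $t \geq 0$, so $g(a)(a - 2\dt)_+ = (a - 2\dt)_+$, and hence $g(a) z' = z'$ for every $z'$ in $\ov{(a - 2\dt)_+ A (a - 2\dt)_+}$; in particular $g(a) q = q$, and taking adjoints $q g(a) = q$, whence $q = g(a) q g(a)$. Since $0 \le q \le 1$ this gives $q = g(a) q g(a) \le g(a)^2$, and the scalar inequality $g(t)^2 \le \tfrac{1}{2\dt} t$ (a routine check on $[\dt, 2\dt]$, trivial elsewhere) yields $g(a)^2 \le \tfrac{1}{2\dt} a$. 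Thus $q \le \af a$ with $\af = \tfrac{1}{2\dt} \in (0, \I)$, completing the proof. I expect the gap step to be the main obstacle: plain Cuntz subequivalence supplies only the approximants $v a v^* \approx p$, and the whole argument hinges on upgrading this to the exact identity $p = w (a - 2\dt)_+ w^*$ with a strictly positive gap, since without the gap the equivalent projection need not lie in the hereditary subalgebra of $a$ and both $q \le \af a$ and $g(a) q = q$ would fail.
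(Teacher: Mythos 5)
Your proof is correct and takes essentially the same route as the paper's: both upgrade the approximate relation $v a v^* \approx p$ to an exact identity of the form $p = w (a - \dt_0)_+ w^*$ via R{\o}rdam's lemma (the paper through Proposition 2.17(iii) and Theorem 2.13 of Ara--Perera--Toms applied to $(p - \ep)_+ = (1 - \ep) p$, you through the same projection trick after shrinking $\dt$ against $\| v \|^2$), and then set $z = w [(a - \dt_0)_+]^{1/2}$ and $q = z^* z$. The only real variation is cosmetic: the paper reads off $\af = (1 - 2 \ep)^{-1} \| v \|^2$ directly from the formula for $q$ (using the contraction from Theorem 2.13), while your bound $q \leq g (a)^2 \leq \frac{1}{2 \dt}\, a$ gives $\af = \frac{1}{2 \dt}$ without any norm control on the R{\o}rdam element.
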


\begin{proof}
Set $\ep = \frac{1}{4}$.
Use Proposition 2.17(iii) of~\cite{ArPrTm} to choose $\dt_0 > 0$
such that $(p - \ep)_{+} \precsim (a - \dt_0)_{+}$.
Then there is $v \in A$ such that
\[
\bigl\| v (a - \dt_0)_{+} v^* - (p - \ep)_{+} \bigr\| < \ep.
\]
Theorem 2.13 of~\cite{ArPrTm} provides $x \in A$ such that
\[
\| x \| \leq 1
\andeqn
x v (a - \dt_0)_{+} v^* x^* = (p - 2 \ep)_{+}.
\]
Set
\[
y = (1 - 2 \ep)^{- 1/2} x v [(a - \dt_0)_{+}]^{1/2}.
\]
Since $(p - 2 \ep)_{+} = (1 - 2 \ep) p$, we have $y y^* = p$.
Therefore
\[
q = y^* y
  = (1 - 2 \ep)^{-1} [(a - \dt_0)_{+}]^{1/2} v^* x^*
        x v [(a - \dt_0)_{+}]^{1/2}
\]
is a \pj.
Using $\| x \| \leq 1$ and setting $\af = (1 - 2 \ep)^{-1} \| v \|^2$,
we get
\[
q \leq (1 - 2 \ep)^{-1} \| v \|^2 (a - \dt_0)_{+}
  \leq \af a.
\]
Also, using $\dt = \dt_0 / 2$ in the definition of~$g$,
we get $g (a) (a - \dt_0)_{+} = (a - \dt_0)_{+}$,
so $g (a) q = q$.
\end{proof}

The next lemma is a more precise version
of Corollary~2.4 of~\cite{EllNiu2},
which says that $\rc (C (X)) \in \Nz \cup\{ \I \}$.
We give a full proof since it is easy to be off by~$1$.

\begin{lem}\label{L_1725_Dfn_rcCX}
Let $X$ be a \chs.
Then $\rc (C (X))$ is the least $r \in \Nz \cup\{ \I \}$
such that whenever $n \in \N$ and $a, b \in C (X, M_n)_{+}$ satisfy
\[
\rank (a (x)) + r + 1 \leq \rank (b (x))
\]
for all $x \in X$, then $a \precsim b$.
\end{lem}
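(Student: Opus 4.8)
The plan is to unwind the definition of the radius of comparison and show that the seemingly cleaner rank condition stated in the lemma is equivalent to the standard Cuntz-comparison definition, keeping careful track of the off-by-one that the paper warns about. First I would recall the definition of $\rc(A)$ for a \uca~$A$: it is the infimum of those $r \in [0, \I)$ such that whenever $a, b \in M_\infty(A)_+$ satisfy $d_\tau(a) + r < d_\tau(b)$ for every quasitrace (or tracial state) $\tau$, then $a \precsim b$. For $A = C(X)$ the quasitraces are all traces, and the normalized dimension functions are given by integrating the pointwise rank against Borel probability measures on~$X$. The key translation is that for $a \in C(X, M_n)_+$ one has $d_\mu(a) = \int_X \rank(a(x)) \, d\mu(x)$ for a probability measure~$\mu$, so the condition ``$d_\mu(a) + r < d_\mu(b)$ for all~$\mu$'' is controlled by the pointwise rank gap $\rank(b(x)) - \rank(a(x))$.

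The heart of the argument is therefore to relate the uniform pointwise bound ``$\rank(a(x)) + r + 1 \leq \rank(b(x))$ for all~$x$'' to the tracial bound ``$d_\mu(a) + r < d_\mu(b)$ for all~$\mu$.'' In one direction this is immediate: if $\rank(a(x)) + r + 1 \leq \rank(b(x))$ pointwise, then integrating gives $d_\mu(a) + r + 1 \leq d_\mu(b)$, so $d_\mu(a) + r < d_\mu(b)$ for every~$\mu$. Since the point-mass measures $\mu = \dt_x$ are among the probability measures on~$X$, the converse uses them: if $d_\mu(a) + r \le d_\mu(b)$ were to fail the integer gap at some point, testing against $\dt_x$ recovers the pointwise inequality. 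Because ranks are integers and $r \in \Nz$, the strict tracial inequality $d_{\dt_x}(a) + r < d_{\dt_x}(b)$ is exactly $\rank(a(x)) + r + 1 \leq \rank(b(x))$. This is where the integrality of $\rc(C(X))$ (Corollary~2.4 of~\cite{EllNiu2}) enters and lets us replace a real infimum by a least integer~$r$.

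I would then assemble these equivalences into the statement that $\rc(C(X))$ equals the least integer~$r$ with the stated extension property. One shows that if $r \geq \rc(C(X))$ then the pointwise rank gap $r+1$ forces $d_\mu(a)+r < d_\mu(b)$ for all~$\mu$ and hence $a \precsim b$; conversely, if the stated rank-comparison property holds for some integer~$r$, then using point masses one verifies the defining inequality of $\rc$, so $\rc(C(X)) \le r$. Combining the two inclusions and minimizing over~$r$ gives the claimed characterization.

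\textbf{Main obstacle.} The delicate point, and the reason the paper insists on a full proof, is the bookkeeping of the strict-versus-nonstrict inequality and the $+1$. The definition of $\rc$ uses a \emph{strict} inequality $d_\tau(a) + r < d_\tau(b)$, whereas the lemma states a \emph{nonstrict} integer inequality $\rank(a(x)) + r + 1 \le \rank(b(x))$; reconciling these without gaining or losing a unit requires that the dimension functions on $C(X)$ take only nonnegative integer values at point masses and that $\rc$ is itself an integer, so that ``least $r$'' makes sense and the strict real inequality can be converted to a nonstrict integer one. I would also need to be careful that comparison $a \precsim b$ in $C(X, M_n)$ is equivalent to comparison of the amplifications in $M_\infty(C(X))$, and that it suffices to test on all $n \in \N$ simultaneously rather than a single fixed matrix size; these are routine but are exactly the places where an off-by-one error would creep in.
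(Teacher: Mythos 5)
Your proposal is correct, but it is assembled from different ingredients than the paper's proof. The paper takes as its starting point Lemma~2.3 of~\cite{EllNiu2}, which already expresses $\rc(C(X))$ as the infimum of all real $\rh > 0$ such that the \emph{pointwise rank} condition $\rank(a(x)) + \rh < \rank(b(x))$ for all $x$ implies $a \precsim b$; from there the entire proof is integer bookkeeping, in both directions: if $\rh \geq r$ then strictness plus integrality of ranks upgrades the condition to $\rank(a(x)) + r + 1 \leq \rank(b(x))$, and if $\rh < r$ then minimality of $r$ supplies an explicit pair with $\rank(a(x)) + r \leq \rank(b(x))$ pointwise but $a \not\precsim b$, which also violates the condition at~$\rh$. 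Notably, this second direction means the paper \emph{re-proves} integrality of $\rc(C(X))$ rather than assuming it, which is why the lemma is billed as ``a more precise version'' of Corollary~2.4 of~\cite{EllNiu2}. You instead work from the original trace-based definition of $\rc$, re-deriving the content of Lemma~2.3 of~\cite{EllNiu2} via the identification $d_\mu(a) = \int_X \rank(a(x))\, d\mu(x)$ and testing against point masses, and then you import integrality from Corollary~2.4 of~\cite{EllNiu2} to convert the sandwich $\rc(C(X)) \leq r_0 \leq \lceil \rc(C(X)) \rceil$ into equality. This is logically sound (no circularity, since Corollary~2.4 is established independently in~\cite{EllNiu2}), and your route is more self-contained on the measure-theoretic side while less self-contained on the integrality side. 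Two points deserve more care than your sketch gives them: the identity $d_\mu(a) = \int_X \rank(a(x))\, d\mu(x)$ needs a monotone-convergence argument using lower semicontinuity of $x \mapsto \rank(a(x))$; and in your step ``$r \geq \rc(C(X))$ implies the property at~$r$,'' the comparison condition need not hold at $\rh = \rc(C(X))$ itself (the infimum may a priori not be attained), so you must use the unit of slack to pick $\rh \in (\rc(C(X)), r+1)$ --- your ``main obstacle'' paragraph shows you are aware of exactly this, so I count it as handled rather than a gap.
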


\begin{proof}
Let $r$ be the integer in the statement.
Lemma~2.3 of~\cite{EllNiu2} implies that
$\rc (C (X))$ is the infimum of all $\rh > 0$
such that whenever $n \in \N$ and $a, b \in C (X, M_n)_{+}$ satisfy
\[
\rank (a (x)) + \rh < \rank (b (x))
\]
for all $x \in X$, then $a \precsim b$.
It therefore suffices to show that this condition holds if $\rh \geq r$
and fails if $\rh < r$.

For the first, suppose $\rh \geq r$
and $\rank (a (x)) + \rh < \rank (b (x))$ for all $x \in X$.
Since $r$, $\rank (a (x))$, and $\rank (b (x))$ are integers,
we must have $\rank (a (x)) + r + 1 \leq \rank (b (x))$ for all $x \in X$.
So $a \precsim b$ by the definition of~$r$.

Now suppose $\rh < r$.
By the definition of~$r$, there are $a, b \in C (X, M_n)_{+}$
such that $\rank (a (x)) + r \leq \rank (b (x))$ for all $x \in X$
but $a \not\precsim b$.
Then $\rank (a (x)) + \rh < \rank (b (x))$ for all $x \in X$,
and $a \not\precsim b$.
\end{proof}

We state the next lemma separately for convenient reference.

\begin{lem}\label{L_2821_PartE}
Let $X$ be a \chs, let $X_0 \S X$ be closed, let $d \in \N$,
and let $\xi_0 \colon X_0 \to S^d$ be \ct.
Then there are a closed subset $X_1 \S X$ with $X_0 \S \sint (X_1)$
and a \cfn{} $\xi_1 \colon X_1 \to S^d$ such that $\xi_1 |_{X_0} = \xi_0$.
\end{lem}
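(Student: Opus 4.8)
The plan is to use that the sphere $S^d$, realized as the unit sphere in $\R^{d + 1}$, is a retract of $\R^{d + 1} \SM \{ 0 \}$ via the radial map $v \mapsto v / \| v \|$, together with the Tietze extension theorem. Since $X$ is a \chs, it is normal, so Tietze applies; this is what makes the local extension into the sphere possible with no hypothesis on $\dim (X)$ (in contrast to Theorem~\ref{T_1801_Dim_exts}, where extension over all of~$X$ is at issue).

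First I would compose $\xi_0$ with the inclusion $S^d \S \R^{d + 1}$ and apply the Tietze extension theorem to each of the $d + 1$ coordinate functions, obtaining a \ct{} map $\eta \colon X \to \R^{d + 1}$ with $\eta |_{X_0} = \xi_0$. For $x \in X_0$ we have $\| \eta (x) \| = \| \xi_0 (x) \| = 1$, so, by continuity of $x \mapsto \| \eta (x) \|$, the set $U = \{ x \in X : \| \eta (x) \| > 1/2 \}$ is an open neighborhood of $X_0$ on which $\eta$ is nowhere zero. On $U$ the normalized map $\zeta (x) = \eta (x) / \| \eta (x) \|$ is then a well defined \cfn{} $U \to S^d$, and $\zeta |_{X_0} = \xi_0$ because $\| \eta \| = 1$ on $X_0$.

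Finally I would pass from the open neighborhood $U$ to a closed one. Using normality of~$X$ once more, choose an open set $V$ with $X_0 \S V \S \ov{V} \S U$, set $X_1 = \ov{V}$, and let $\xi_1 = \zeta |_{X_1}$. Then $X_1$ is closed, $X_0 \S V \S \sint (X_1)$, and $\xi_1 |_{X_0} = \xi_0$, as required. I do not expect a genuinely hard step: the only points needing care are the two appeals to normality (both automatic, since a \chs{} is normal), which simultaneously license the Tietze extension and the existence of the closed neighborhood~$X_1$, together with the observation that restricting to~$U$ keeps the radial retraction bounded away from the origin, so that $\zeta$ is continuous.
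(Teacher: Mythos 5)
Your proof is correct and follows essentially the same route as the paper: Tietze extension into $\R^{d+1}$ followed by radial normalization away from the origin. The only difference is cosmetic: the paper simply takes $X_1 = \bigl\{ x \in X \colon \| \et (x) \| \geq \frac{1}{2} \bigr\}$, a closed superlevel set whose interior contains $X_0$, so your second appeal to normality (choosing $V$ with $X_0 \S V \S \ov{V} \S U$) is not needed.
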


\begin{proof}
Identify $S^d$ with the unit sphere in $\R^{d + 1}$.
The Tietze Extension Theorem provides a \cfn{} $\et \colon X \to \R^{d + 1}$
which extends~$\xi_0$.
Set $X_1 = \bigl\{ x \in X \colon \| \et (x) \| \geq \frac{1}{2} \bigr\}$
and $\xi_1 (x) = \| \et (x) \|^{-1} \et (x)$.
\end{proof}

\begin{prp}\label{P_1725_ExtPr1}
Let $X$ be a \chs.
Let $r \in \Nz$ satisfy $r \geq \rc (C (X))$.
Then for every closed subset $X_0 \S X$
and every \cfn{} $\xi_0 \colon X_0 \to S^{2 r + 5}$,
there exist \cfn{s} $\xi \colon X \to S^{2 r + 5}$
and $\zt \colon X_0 \to S^1$ such that,
identifying $S^{2 r + 5}$ with the unit sphere in $\C^{r + 3}$,
for all $x \in X_0$ we have $\xi (x) = \zt (x) \xi_0 (x)$.
\end{prp}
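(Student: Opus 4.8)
The plan is to translate the statement into one about \pj{s} and then invoke \Lem{L_1723_Switch}. Identifying $S^{2 r + 5}$ with the unit sphere of $\C^{r + 3}$, a \cfn{} into $S^{2 r + 5}$ is the same as a \ct{} unit vector field $\et$, and $p = \et \et^*$ is then a rank one \pj{} in the appropriate matrix algebra over a compact space. Taking a constant unit vector~$v_0$ and $s = \et v_0^*$ gives $s s^* = \et \et^* = p$ and $s^* s = v_0 v_0^*$, so $p$ is trivial in the sense of Convention~\ref{Cv_1722_Triv}. Conversely, a trivial rank one \pj{} arises this way: if $s s^* = p$ and $s^* s = v_0 v_0^*$, then $\et = s v_0$ is a \ct{} unit section with $\et \et^* = p$. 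Finally, if two unit sections $\xi, \xi_0$ satisfy $\xi \xi^* = \xi_0 \xi_0^*$ at a point, they differ there by the scalar $\xi_0^* \xi \in S^1$. Thus with $p_0 = \xi_0 \xi_0^*$ it suffices to produce a trivial rank one \pj{} $p \in C (X, M_{r + 3})$ with $p |_{X_0} = p_0$; then $\xi = s v_0$ and $\zt (x) = \xi_0 (x)^* \xi (x)$ will furnish the required data.

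The main obstacle is to extend $p_0$, and the key step is to build a single globally defined trivial rank one \pj{} $q \in C (X, M_{r + 3})$ with $q |_{X_0} \perp p_0$; a merely pointwise or local orthogonal complement would not extend over~$X$, so this is exactly where the hypothesis $r \geq \rc (C (X))$ must be spent, and where the matrix size $r + 3$ is calibrated. First I would extend the unit vector field $\xi_0$ to a \ct{} $\et \colon X \to \C^{r + 3}$ by the Tietze theorem, and form a positive element $a \in C (X, M_{r + 3})_{+}$ of rank at most~$1$ with $a |_{X_0} = p_0$, for instance $a = h (\et \et^*)$ with $h (\ld) = \min (\ld, 1)$. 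Set $b = 1 - a$, which is positive, satisfies $b |_{X_0} = 1 - p_0$, and has $\rank (b (x)) \geq (r + 3) - 1 = r + 2$ for all $x \in X$.

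Now let $e \in C (X, M_{r + 3})$ be a constant rank one \pj. Then $\rank (e (x)) + r + 1 = r + 2 \leq \rank (b (x))$ for every $x$, so \Lem{L_1725_Dfn_rcCX} yields $e \precsim b$. Applying \Lem{L_1725_Cut} with this $e$ and $b$ produces $\af > 0$ and a \pj{} $q$ with $q$ \mvnt{} to~$e$ and $q \leq \af b$. Since $q \sim e$ and $e$ is constant of rank one, $q$ is a trivial rank one \pj. Moreover, for $x \in X_0$ we have $q (x) \leq \af ( 1 - p_0 (x))$, and because $1 - p_0 (x)$ is a \pj{} orthogonal to $p_0 (x)$, this forces $p_0 (x) q (x) p_0 (x) = 0$, hence $q (x) p_0 (x) = 0$; thus $q |_{X_0} \perp p_0$ as desired.

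With $q$ in hand, \Lem{L_1723_Switch} (applied with $Y = X_0$) yields a trivial rank one \pj{} $p \in C (X, M_{r + 3})$ with $p |_{X_0} = p_0$. Writing the triviality of~$p$ as $s s^* = p$ and $s^* s = v_0 v_0^*$ for a constant unit vector~$v_0$, I set $\xi = s v_0$, a \ct{} map $X \to S^{2 r + 5}$ with $\xi \xi^* = p$. For $x \in X_0$ both $\xi (x)$ and $\xi_0 (x)$ are unit vectors spanning the range of $p_0 (x)$, so $\zt (x) = \xi_0 (x)^* \xi (x)$ lies in $S^1$, depends continuously on~$x$, and satisfies $\xi (x) = \zt (x) \xi_0 (x)$. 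This completes the argument; I expect the only genuinely delicate point to be the construction of~$q$ in the preceding paragraphs, all remaining steps being bookkeeping with the dictionary between unit sections and rank one \pj{s}.
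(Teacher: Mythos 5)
Your proof is correct and takes essentially the same route as the paper's: reduce to extending the trivial rank one projection $p_0 = \xi_0 \xi_0^*$, use Lemma~\ref{L_1725_Dfn_rcCX} together with Lemma~\ref{L_1725_Cut} to manufacture a globally defined trivial rank one projection $q$ with $q |_{X_0} \perp p_0$, then conclude via Lemma~\ref{L_1723_Switch} and the dictionary between unit sections and trivial rank one projections. The only differences are cosmetic: you build the positive element $b$ from a Tietze extension truncated by $h (\ld) = \min (\ld, 1)$ where the paper extends $\xi_0$ as a sphere-valued map on a closed neighborhood (Lemma~\ref{L_2821_PartE}) and cuts down with a bump function, and you extract orthogonality on $X_0$ from $q \leq \af b$ rather than from $g (b) q = q$.
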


\begin{proof}
We can assume $X_0 \neq \E$.
Let $X_1 \S X$ and $\xi_1 \colon X_1 \to S^{2 r + 5}$
be as in Lemma~\ref{L_2821_PartE}.

We use the usual scalar product on $\C^{r + 3}$.
For $x \in X_1$ define a rank one \pj{}
$p_1 (x) \in M_{r + 3} = L (\C^{r + 3})$
by $p_1 (x) \et = \langle \et, \, \xi_1 (x) \rangle \xi_1 (x)$
for all $\et \in \C^{r + 3}$.
Then $p_1$ is a \pj{} in $C (X_1, M_{r + 3})$.
Set $p_0 = p_1 |_{X_0}$.
Fix $x_0 \in X_0$, and let $e \in C (X, M_{r + 3})$ be the
constant \pj{} with value $p_1 (x_0)$.
Set $\et_1 = \xi_1 (x_0)$.

We claim that $p_1$ is trivial.
To see this, define $s \in C (X_1, M_{r + 3})$ by
$s (x) \et = \langle \et, \, \et_1 \rangle \xi_1 (x)$
for $x \in X_1$ and $\et \in \C^{r + 3}$.
Then check that
$s (x)^* \et = \langle \et, \, \xi_1 (x) \rangle \et_1$
for $x \in X_1$ and $\et \in \C^{r + 3}$, that $s s^* = p_1$,
and that $s^* s = e |_{X_1}$.

Choose a \cfn{} $f \colon X \to [0, 1]$ such that $f (x) = 1$
for all $x \in X_0$ and $\supp (f) \S \sint (X_1)$.
Define $b \in C (X, M_{r + 3})$ by
\[
b (x) = \begin{cases}
    1 - f (x) p_1 (x) & \hspace*{1em} x \in X_1
        \\
   1 & \hspace*{1em} x \not\in X_1.
\end{cases}
\]

For $x \in X$, we have
\[
\rank (b (x)) \geq r + 2 = r + 1 + \rank (e (x)).
\]
Therefore $e \precsim_{C (X)} b$ by Lemma~\ref{L_1725_Dfn_rcCX}.
Lemma~\ref{L_1725_Cut} gives a \pj{} $q \in C (X, M_{r + 3})$
and $\dt > 0$
such that $q$ is \mvnt{} to~$e$ and, with $g$ as in the statement
of that lemma, we have $g (b) q = q$.
In particular, $q$ is a rank one trivial \pj{}
with $q |_{X_0} \leq 1 - p_0$.
Since $p_0$ is a rank one trivial \pj{} on~$X_0$,
Lemma~\ref{L_1723_Switch} implies that there is a rank one trivial \pj{}
$p \in C (X, M_{r + 3})$ such that $p |_{X_0} = p_0$.
By triviality, there is $t \in C (X, M_{r + 3})$ such that
$t t^* = p$ and $t^* t = e$.
Define $\xi \colon X \to \C$ by $\xi (x) = t (x) \et_1$.
Then $\| \xi (x) \|_2 = 1$ for all $x \in X$,
so $\xi$ is a function from $X$ to $S^{2 r + 5}$.
Moreover, for $x \in X_0$, we have $p (x) = p_0 (x)$,
so $\xi (x) \in \C \xi_0 (x)$,
and $\zt (x) = \langle \xi (x), \xi_0 (x) \rangle$ is a \cfn{}
with $| \zt (x) | = 1$ and such that $\xi (x) = \zt (x) \xi_0 (x)$.
\end{proof}

\begin{rmk}\label{R_1726_Ext_pjs}
The conclusion of Proposition~\ref{P_1725_ExtPr1} can be restated
in terms of projections, as follows.
For every closed subset $X_0 \S X$
and every rank one trivial \pj{} $p_0 \in C (X_0, M_{r + 3})$,
there exists a rank one trivial \pj{} $p \in C (X, M_{r + 3})$
such that $p (x) = p_0 (x)$ for all $x \in X_0$.
\end{rmk}

\section{From extensions of projections to radius of
 comparison}\label{Sec_1801_Sec__2}

\indent
The following result and its proof were suggested by Thomas Nikolaus.

\begin{prp}\label{T_1801_Htpy}
Let $d \in \N$ be even.
Identify $S^{2 d - 1}$ with the unit sphere in~$\C^d$.
Define $m, p \colon S^1 \times S^{2 d - 1} \to S^{2 d - 1}$
by $m (\ld, \xi) = \ld \xi$ and $p (\ld, \xi) = \xi$
for $\ld \in S^1$ and $\xi \in S^{2 d - 1}$.
Then $m$ and $p$ are homotopic.
\end{prp}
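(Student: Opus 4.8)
The plan is to use the hypothesis that $d$ is even in an essential way, by introducing a quaternionic structure. Write $d = 2 e$ and fix an $\R$-linear isometric identification $\C^d \cong \mathbb{H}^e$, where $\mathbb{H}$ denotes the quaternions, chosen so that complex scalar multiplication by $\ld \in \C$ on $\C^d$ becomes \emph{left} multiplication by $\ld$, regarded as an element of $\C \S \mathbb{H}$, on $\mathbb{H}^e$. Concretely, for the single factor $\mathbb{H} \cong \C^2$ I would write a quaternion as $z + w j$ with $z, w \in \C$ and identify it with $(z, w)$; then associativity gives $\ld (z + w j) = \ld z + (\ld w) j$, so left multiplication by $\ld \in \C$ is exactly the standard coordinatewise complex scalar multiplication. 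Applying this in each of the $e$ coordinates produces the required identification of $\C^d$ with $\mathbb{H}^e$.

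Under this identification, left multiplication by a unit quaternion $q \in S^3$ (the unit sphere of $\mathbb{H}$) acts diagonally on $\mathbb{H}^e$ and preserves norms, so it restricts to a continuous map $\af \colon S^3 \times S^{2 d - 1} \to S^{2 d - 1}$, given by $\af (q, \xi) = q \xi$, with $\af (1, \xi) = \xi$. By the first paragraph, the given map $m$ is the restriction of $\af$ along the standard inclusion $S^1 \S S^3$, that is, $m (\ld, \xi) = \af (\ld, \xi)$ for $\ld \in S^1$ and $\xi \in S^{2 d - 1}$, while $p (\ld, \xi) = \af (1, \xi)$.

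Next I would build the homotopy directly. Since $S^3$ is simply connected, the inclusion $S^1 \hookrightarrow S^3$ is null-homotopic; choose a continuous $h \colon [0, 1] \times S^1 \to S^3$ with $h (0, \ld) = \ld$ and $h (1, \ld) = 1$ for all $\ld \in S^1$. Then
\[
H (t, \ld, \xi) = \af ( h (t, \ld), \, \xi ) = h (t, \ld) \xi
\]
defines a continuous map $H \colon [0, 1] \times S^1 \times S^{2 d - 1} \to S^{2 d - 1}$ with $H (0, \ld, \xi) = \ld \xi = m (\ld, \xi)$ and $H (1, \ld, \xi) = 1 \cdot \xi = \xi = p (\ld, \xi)$, which is the desired homotopy from $m$ to $p$.

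The only real work lies in the first paragraph: pinning down the identification $\C^d \cong \mathbb{H}^e$ so that the complex action becomes quaternionic left multiplication, and keeping the left/right module conventions straight. Everything after that is formal. It is worth recording where evenness enters: the argument works precisely because the acting circle $S^1$ sits inside the simply connected group $S^3$ of unit quaternions, and realizing this enlargement of the action forces the complex coordinates to be paired up. For $d$ odd no such extension of the $S^1$-action to a simply connected group is available by this device, and one should expect the conclusion to fail in general, the obstruction living in $\pi_{2 d} (S^{2 d - 1}) \cong \Z / 2 \Z$.
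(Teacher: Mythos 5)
Your proof is correct and follows essentially the same route as the paper: extend the $S^1$-action on $S^{2d-1}$ to an action of the unit quaternions $S^3$ via an isometric identification $\C^d \cong \mathbb{H}^{d/2}$, then use simple connectivity of $S^3$ to null-homotope the inclusion $S^1 \hookrightarrow S^3$. Your extra care with the left/right module conventions (so that complex scalar multiplication really is left quaternionic multiplication) is a worthwhile point that the paper leaves implicit.
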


\begin{proof}
Let ${\mathbb{H}}$ be the quaternions.
Recall that ${\mathbb{H}}$, in its usual norm,
is isometrically isomorphic, as a complex normed vector space, to $\C^2$,
and that the elements $1, i, j, k \in {\mathbb{H}}$
form an orthonormal basis.
Using this identification, choose an isometric isomorphism of $\C^d$
with ${\mathbb{H}}^{d / 2}$.
This gives a multiplication map ${\mathbb{H}} \times \C^d \to \C^d$.
We identify the quaternions of norm~$1$ with $S^3$.
This gives an inclusion $g \colon S^1 \to S^3$.
Since multiplication by a quaternion of norm~$1$ is isometric,
the multiplication map above restricts to
$n \colon S^3 \times S^{2 d - 1} \to S^{2 d - 1}$,
and $n \circ (g \times \id_{S^{2 d - 1}}) = m$.

Since $\pi_1 (S^3)$ is trivial,
$g$~is homotopic to the composition
of the obvious map $q \colon S^1 \to \{ 1 \}$
and the inclusion $t$ of $\{ 1 \}$ in $S^3$,
that is, $g \simeq t \circ q$.
Therefore, using a direct computation of the map at the last step,
\[
m = n \circ (g \times \id_{S^{2 d - 1}})
  \simeq n \circ (t \times \id_{S^{2 d - 1}})
        \circ (q \times \id_{S^{2 d - 1}})
  = p.
\]
This completes the proof.
\end{proof}

Proposition~\ref{T_1801_Htpy} may well to be false when $d$ is odd.

\begin{prp}\label{T_1801_Bound}
Let $X$ be a \chs.
Let $r \in \Nz$ be odd and satisfy $\dim (X) \geq 2 r + 6$.
Then $\rc (C (X)) \geq r + 1$.
\end{prp}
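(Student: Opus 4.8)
The plan is to prove the contrapositive. Assume $\rc (C (X)) \leq r$; this is exactly the negation of the conclusion, since $\rc (C (X)) \in \Nz \cup \{ \I \}$ by \Lem{L_1725_Dfn_rcCX}, so $\rc (C (X)) < r + 1$ forces $\rc (C (X)) \leq r$. From this I would derive $\dim (X) \leq 2 r + 5$, contradicting $\dim (X) \geq 2 r + 6$. By \Thm{T_1801_Dim_exts} applied with $d = 2 r + 5$, it suffices to prove that for every closed $X_0 \S X$ and every \ct{} map $\xi_0 \colon X_0 \to S^{2 r + 5}$, there is a \ct{} extension of $\xi_0$ over all of~$X$.

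Given such an~$\xi_0$, the first step is to apply \Prp{P_1725_ExtPr1}, which is available because $r \geq \rc (C (X))$. This produces \cfn{s} $\xi \colon X \to S^{2 r + 5}$ and $\zt \colon X_0 \to S^1$ such that, with $S^{2 r + 5}$ identified with the unit sphere in~$\C^{r + 3}$, we have $\xi (x) = \zt (x) \xi_0 (x)$ for all $x \in X_0$. Thus $\xi$ is defined on all of~$X$, but its restriction to $X_0$ is not $\xi_0$ itself; rather it is $\xi_0$ twisted by the $S^1$-valued factor~$\zt$. The remaining task is to remove this twist up to homotopy and then to upgrade the homotopy to an honest extension.

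Here the parity hypothesis enters. Since $r$ is odd, $d := r + 3$ is even, and $2 d - 1 = 2 r + 5$, so \Prp{T_1801_Htpy} applies and gives a homotopy between the maps $m (\ld, \et) = \ld \et$ and $p (\ld, \et) = \et$ from $S^1 \times S^{2 r + 5}$ to $S^{2 r + 5}$. Composing this homotopy with the \ct{} map $x \mapsto (\zt (x), \, \xi_0 (x))$ from $X_0$ into $S^1 \times S^{2 r + 5}$ yields a homotopy from $\xi |_{X_0} = m \circ (\zt, \xi_0)$ to $\xi_0 = p \circ (\zt, \xi_0)$. Hence $\xi |_{X_0} \simeq \xi_0$ as maps $X_0 \to S^{2 r + 5}$.

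The final step, which I expect to be the main obstacle, is that $\xi |_{X_0}$ manifestly extends over~$X$ (to~$\xi$), so I must transfer extendability across the homotopy $\xi |_{X_0} \simeq \xi_0$. This requires the homotopy extension property for the pair $(X, X_0)$ with values in~$S^{2 r + 5}$, and the care needed is that $X$ is only assumed \chs{}, hence normal, but not metrizable. I would resolve this by invoking that $S^{2 r + 5}$, being a finite CW complex, is an absolute neighborhood extensor for normal spaces; Borsuk's homotopy extension theorem then shows that homotopic maps $X_0 \to S^{2 r + 5}$ are simultaneously extendable over~$X$. Consequently $\xi_0$ extends, and \Thm{T_1801_Dim_exts} gives $\dim (X) \leq 2 r + 5$, completing the contrapositive.
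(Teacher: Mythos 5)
Your proposal is correct, and it follows the paper's skeleton exactly up to the final step: the same contrapositive reduction using integrality of $\rc (C (X))$ via \Lem{L_1725_Dfn_rcCX}, the same appeal to \Thm{T_1801_Dim_exts} with $d = 2 r + 5$, the same use of \Prp{P_1725_ExtPr1}, and the same parity bookkeeping ($r$ odd makes $d = r + 3$ even, so \Prp{T_1801_Htpy} applies to $S^{2 d - 1} = S^{2 r + 5}$). Where you genuinely diverge is in upgrading the homotopy $\xi |_{X_0} \simeq \xi_0$ to an honest extension of $\xi_0$. You cite Borsuk's homotopy extension theorem, using that $S^{2 r + 5}$ is an absolute neighborhood extensor for normal spaces; this is legitimate here, and the usual point of care---normality of $X \times [0, 1]$---is automatic because $X \times [0, 1]$ is compact Hausdorff, so no Dowker-type pathology can occur. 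The paper instead keeps things self-contained: it first extends $\xi_0$ to a map $\xi_1$ on a closed neighborhood $X_1$ with $X_0 \S \sint (X_1)$ (\Lem{L_2821_PartE}), applies \Prp{P_1725_ExtPr1} to $(X_1, \xi_1)$ so that the twisting function $\zt$ is defined on all of~$X_1$, and then splices with a Urysohn function $f$ ($f = 1$ on $X_0$, $\supp (f) \S \sint (X_1)$), setting $\xi (x) = H \bigl( f (x), \, \zt (x)^{-1}, \, \et (x) \bigr)$ on $X_1$ and $\xi = \et$ off~$X_1$. That bump-function splice is precisely the standard proof of the homotopy extension property specialized to the case at hand, so the two arguments use the same mechanism; yours is shorter but imports ANE/ANR theory as a black box (machinery the paper otherwise never invokes), while the paper trades the citation for a one-paragraph explicit construction. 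If you wished to avoid the citation, you could apply \Prp{P_1725_ExtPr1} on the neighborhood $X_1$ from \Lem{L_2821_PartE} rather than on $X_0$ itself, after which the explicit splice goes through verbatim.
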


\begin{proof}
It is equivalent to prove that if
$r \in \Nz$ is odd and satisfies $\rc (C (X)) \leq r$,
then $\dim (X) \leq 2 r + 5$.
We use the criterion of Theorem~\ref{T_1801_Dim_exts}.
Thus, let $X_0 \S X$ be closed,
and let $\xi_0 \colon X_0 \to S^{2 r + 5}$ be \ct.
Let $X_1 \S X$ and the extension $\xi_1 \colon X_1 \to S^{2 r + 5}$
of $\xi_0$ be as in Lemma~\ref{L_2821_PartE}.

Apply Proposition~\ref{P_1725_ExtPr1} with $X_1$ in place of $X_0$
and $\xi_1$ in place of $\xi_0$, getting
\cfn{s} $\et \colon X \to S^{2 r + 5}$ (called $\xi$ there)
and $\zt \colon X_0 \to S^1$ such that,
identifying $S^{2 r + 5}$ with the unit sphere in $\C^{r + 3}$,
for all $x \in X_0$ we have $\et (x) = \zt (x) \xi_0 (x)$.
Also choose a \cfn{} $f \colon X \to [0, 1]$ such that
$f (x) = 1$ for all $x \in X_0$ and $\supp (f) \S \sint (X_1)$.

By Proposition~\ref{T_1801_Htpy} (with $n = r + 3$),
there is a \cfn{}
\[
H \colon [0, 1] \times S^1 \times S^{2 r + 5} \to S^{2 r + 5}
\]
such that for all $y \in S^{2 r + 5}$ and $\ld \in S^1$ we have
\[
H (0, \ld, y) = y
\andeqn
H (1, \ld, y) = \ld y.
\]
Now for $x \in X$ define
\[
\xi (x)
 = \begin{cases}
   H \bigl( f (x), \, \zt (x)^{-1}, \, \et (x) \bigr)
         & \hspace*{1em} x \in X_1
       \\
   \et (x) & \hspace*{1em} x \not\in X_1.
\end{cases}
\]
For $x \in \partial X_1$ we have $f (x) = 0$,
so $H \bigl( f (x), \, \zt (x)^{-1}, \, \et (x) \bigr) = \et (x)$.
Therefore $\xi$ is a \cfn{} from $X$ to $S^{2 r + 5}$
whose restriction to $X_0$ is~$\xi_0$.
\end{proof}

If we take apart the parity conditions in Proposition~\ref{T_1801_Bound},
we get the following estimates.

\begin{thm}\label{T_2705_Ests}
Let $X$ be a \chs.
\begin{enumerate}
\item\label{I_T_2705_Inf}
If $\dim (X) = \I$, then $\rc (C (X)) = \I$.
\item\label{I_T_2705_0}
If $\dim (X) \equiv 0 \pmod{4}$,
then $\rc (C (X)) \geq \frac{1}{2} \bigl( \dim (X) - 4 \bigr)$.
\item\label{I_T_2705_1}
If $\dim (X) \equiv 1 \pmod{4}$,
then $\rc (C (X)) \geq \frac{1}{2} \bigl( \dim (X) - 5 \bigr)$.
\item\label{I_T_2705_2}
If $\dim (X) \equiv 2 \pmod{4}$,
then $\rc (C (X)) \geq \frac{1}{2} \bigl( \dim (X) - 6 \bigr)$.
\item\label{I_T_2705_3}
If $\dim (X) \equiv 3 \pmod{4}$,
then $\rc (C (X)) \geq \frac{1}{2} \bigl( \dim (X) - 7 \bigr)$.
\end{enumerate}
\end{thm}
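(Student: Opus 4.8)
The plan is to reduce everything to Proposition~\ref{T_1801_Bound}, which carries all the topological content; what remains is arithmetic bookkeeping about parities. Throughout, write $D = \dim (X)$ and recall that $\rc (C (X)) \in \Nz \cup \{ \I \}$ by Lemma~\ref{L_1725_Dfn_rcCX}, so in particular $\rc (C (X)) \geq 0$.

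First I would dispose of~(\ref{I_T_2705_Inf}). If $D = \I$, then for every odd $r \in \N$ we have $D \geq 2 r + 6$, so Proposition~\ref{T_1801_Bound} gives $\rc (C (X)) \geq r + 1$. Letting $r$ run through arbitrarily large odd integers forces $\rc (C (X)) = \I$.

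For the four finite cases the strategy is uniform: given $D < \I$, choose the largest odd integer $r$ with $2 r + 6 \leq D$ and apply Proposition~\ref{T_1801_Bound} to get $\rc (C (X)) \geq r + 1$. Writing $D = 4 k + \ep$ with $\ep \in \{ 0, 1, 2, 3 \}$, the constraint is $r \leq \tfrac{1}{2} (D - 6)$, and a direct check in each residue class shows that the largest odd $r$ satisfying it is $r = 2 k - 3$: for $\ep \in \{ 0, 1 \}$ the right-hand side floors to the odd integer $2 k - 3$, while for $\ep \in \{ 2, 3 \}$ it floors to the even integer $2 k - 2$, just above $2 k - 3$. Hence $r + 1 = 2 k - 2$ in every case, and substituting $D = 4 k + \ep$ shows that $2 k - 2$ equals $\tfrac{1}{2} (D - 4)$, $\tfrac{1}{2} (D - 5)$, $\tfrac{1}{2} (D - 6)$, and $\tfrac{1}{2} (D - 7)$ according as $\ep = 0, 1, 2, 3$, which are precisely the bounds in (\ref{I_T_2705_0})--(\ref{I_T_2705_3}).

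The one delicate point, which I expect to be the only genuine obstacle, is that Proposition~\ref{T_1801_Bound} applies only to \emph{positive} odd~$r$, so the choice $r = 2 k - 3$ is legitimate only when $2 k - 3 \geq 1$, that is, $k \geq 2$. Fortunately this matches exactly the range in which the bound has content: in every residue class the asserted lower bound equals $2 k - 2$, which is $\leq 0$ precisely when $k \leq 1$, and there it holds automatically because $\rc (C (X)) \geq 0$. Thus the range where Proposition~\ref{T_1801_Bound} applies and the range where the estimate is nontrivial coincide, and handling the low-dimensional boundary amounts only to observing that the claimed estimate is vacuous exactly when no admissible positive odd~$r$ exists.
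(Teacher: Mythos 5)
Your proposal is correct and takes essentially the same approach as the paper: in each congruence class it applies Proposition~\ref{T_1801_Bound} with exactly the same choice of odd integer, namely $r = 2k - 3$ when $\dim (X) = 4k + \ep$ (the paper writes these as $r = \frac{1}{2} \dim (X) - 3$, $r = \frac{1}{2} \dim (X) - \frac{7}{2}$, and so on), and disposes of the infinite case by letting $r$ grow. Your explicit treatment of the low-dimensional range, where no positive odd $r$ exists and the asserted bound is vacuous because $\rc (C (X)) \geq 0$, is a minor point of care that the paper leaves implicit.
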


\begin{proof}
Part~(\ref{I_T_2705_Inf}) is clear from Proposition~\ref{T_1801_Bound}.
For~(\ref{I_T_2705_0}),
apply Proposition~\ref{T_1801_Bound} with $r = \frac{1}{2} \dim (X) - 3$.
For~(\ref{I_T_2705_1}) use $r = \frac{1}{2} \dim (X) - \frac{7}{2}$,
for~(\ref{I_T_2705_2}) use $r = \frac{1}{2} \dim (X) - 4$,
and for~(\ref{I_T_2705_3}) use $r = \frac{1}{2} \dim (X) - \frac{9}{2}$.
\end{proof}

The best general statement is the following corollary.

\begin{cor}\label{C_2705_Cor}
Let $X$ be a \chs.
Then $\rc (C (X)) \geq \frac{1}{2} \bigl( \dim (X) - 7 \bigr)$.
\end{cor}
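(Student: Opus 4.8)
The plan is to deduce the corollary directly from \Thm{T_2705_Ests} by a short case analysis on the residue of $\dim (X)$ modulo~$4$, observing that in every case the estimate supplied by the theorem is at least as strong as the single uniform bound claimed here. No new topology or $C^*$-algebra input is needed; all the work has already been done in \Prp{T_1801_Bound} and packaged into the four residue-dependent inequalities of \Thm{T_2705_Ests}.

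First I would dispose of the infinite-dimensional case: if $\dim (X) = \I$, then part~(\ref{I_T_2705_Inf}) of \Thm{T_2705_Ests} gives $\rc (C (X)) = \I$, and the asserted inequality holds vacuously. So assume $\dim (X)$ is finite. The key observation is then purely arithmetic: the four cases of the theorem subtract the constants $4$, $5$, $6$, and $7$ from $\dim (X)$, and each of these is at most~$7$. Writing $c \in \{ 4, 5, 6, 7 \}$ for the constant attached to the residue class of $\dim (X)$, we obtain
\[
\rc (C (X)) \geq \ts{\frac{1}{2}} \bigl( \dim (X) - c \bigr)
 \geq \ts{\frac{1}{2}} \bigl( \dim (X) - 7 \bigr),
\]
which is exactly the claim. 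The inequality is sharp precisely in the case $\dim (X) \equiv 3 \pmod{4}$, where \Thm{T_2705_Ests}(\ref{I_T_2705_3}) already yields $\frac{1}{2} ( \dim (X) - 7 )$ on the nose; in the other three residue classes the theorem gives something strictly better.

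I do not expect any genuine obstacle: the corollary is a bookkeeping step that collapses the four residue-dependent estimates into one clean statement. The only point deserving a moment's attention is that when $\dim (X) < 7$ the right-hand side is nonpositive, so the inequality is trivially true because $\rc (C (X)) \in \Nz \cup \{ \I \}$ is nonnegative by \Lem{L_1725_Dfn_rcCX}; but this degenerate range is already covered by the corresponding inequality of \Thm{T_2705_Ests}, so it requires no separate treatment.
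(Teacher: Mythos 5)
Your proof is correct and matches the paper's intended argument: the corollary is stated in the paper without a separate proof precisely because it follows from \Thm{T_2705_Ests} by taking the worst of the four residue-class constants (namely~$7$, attained when $\dim (X) \equiv 3 \pmod{4}$), together with part~(\ref{I_T_2705_Inf}) for the infinite-dimensional case. Your additional remark about nonnegativity of $\rc (C (X))$ when $\dim (X) < 7$ is harmless but, as you note, unnecessary.
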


\end{document}